\titleformat{\subsubsection}[runin] {\normalfont\bfseries}{\thesubsubsection}{0.7em}{\addperiod}
\newcommand{\addperiod}[1]{#1.}
\titlespacing\subsubsection{5pt}{4pt plus 4pt minus 2pt}{5pt plus 2pt minus 2pt}
\newtheorem{thm}{Theorem}
\newtheorem{lem}{Lemma}[section]
\newtheorem{lemma}[lem]{Lemma}
\newtheorem{propos}[lem]{Proposition}
\newtheorem{conjecture}{Conjecture}
\newcommand{\N}{\mathbb{N}}
\newcommand{\E}{\mathbb{E}}
\newcommand{\eps}{\varepsilon}
\newcommand{\Prob}{\mathbb{P}}
\renewcommand{\Pr}{\mathbb{P}}
\DeclareMathOperator{\Maxload}{MaxLoad}
\title{\vspace{-2.2cm}The power of thinning in balanced allocation}
\author{Ohad N. Feldheim\thanks{Hebrew University of Jerusalem, email:\texttt{Ohad.Feldheim@mail.huji.ac.il}.} \and Ori Gurel-Gurevich\thanks{Hebrew University of Jerusalem, email:\texttt{Ori.Gurel-Gurevich@mail.huji.ac.il}, research was supported by the Israel Science Foundation (grant No. 1707/16).}}
\begin{document}

\maketitle
\begin{abstract}
Balls are sequentially allocated into $n$ bins as follows: for each ball, an independent, uniformly random bin is generated. An overseer may then choose to either allocate the ball to this bin, or else the ball is allocated to a new independent uniformly random bin. The goal of the overseer is to reduce the load of the most heavily loaded bin after $\Theta(n)$ balls have been allocated.
We provide an asymptotically optimal strategy yielding a maximum load of $(1+o(1))\sqrt{\frac{8\log n}{\log\log n}}$ balls.
\end{abstract}
\smallskip
\noindent \textbf{Keywords.} Thinning, two-choices, one-retry, $(1+\alpha)$-choice, load  balancing, balls and bins, balanced allocation, subsampling.

\section{Introduction and Results}

Fix $\rho>0$ and consider a model in which an overseer is monitoring the sequential allocation of $\lfloor \rho n\rfloor$ balls into $n$ bins. Each ball is assigned a \emph{primary allocation}, i.e., an independent, uniformly chosen random bin. Then, the overseer is given the choice to reject this primary allocation, in which case the ball is assigned a \emph{secondary allocation} instead, that is, a new, independent, uniformly chosen random bin. The set of all resulting allocations is called a \emph{two-thinning} of the balls-and-bins process.

A \emph{two-thinning strategy}, is a function determining whether to accept or reject each suggested allocation, depending on all previous allocations. Denote by $\Maxload^f_{t}([n])$ the load of the most heavily loaded bin after the player allocates $\lfloor t\rfloor$ balls into $n$ bins, following the strategy $f$. A strategy is \emph{asymptotically optimal} if $\Maxload^f_{\rho n}([n])\le(1+o(1))\Maxload^g_{\rho n}([n]),$ for any strategy $g$, with high probability.

Here we describe and analyse an optimal two-thinning strategy which we call the \emph{$\ell$-threshold} strategy. This is the two-thinning strategy which rejects a ball whenever the number of primary allocations to the suggested bin is at least $\ell$. Our main result is the following,
\begin{thm}\label{thm:main}
Let $f$ be the $\sqrt{\frac{2\log n}{\log\log n}}$-threshold strategy for the allocation of $\lfloor\rho n\rfloor$ balls into $n$ bins. Then $f$ is asymptotically optimal and, with high probability,
\begin{equation*}
\Maxload^f_{\rho n}([n]) = (1+o(1))\sqrt
{\frac{8\log n}{\log\log n}}.
\end{equation*}
\end{thm}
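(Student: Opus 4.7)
The proof splits into an upper bound on the load of $f$ and a matching lower bound for arbitrary strategies. Write $\ell := \sqrt{2\log n/\log\log n}$ throughout.

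For the upper bound, the pivotal structural observation is that under $f$ each bin receives at most $\ell$ \emph{accepted} primary allocations, since once its load reaches $\ell$ every further primary landing there is rejected. Hence $\text{load}_i \le \ell + Y_i$, where $Y_i$ counts the secondary allocations landing in bin $i$. To bound the $Y_i$'s I first control the total number $S$ of rejected primaries. A rejection at step $t$ occurs iff the primary falls in the hot set $H_t = \{i : \text{load}_i(t) \ge \ell\}$, and a bin can be hot only after having received $\ge \ell$ primary suggestions. Thus $|H_t|$ is stochastically dominated by $\#\{i : \mathrm{Bin}(t,1/n)_i \ge \ell\}$, so that
\[
\E[S] \le \rho n\,\Pr(\mathrm{Bin}(\rho n, 1/n) \ge \ell) \le \rho n (e\rho/\ell)^\ell = n\exp\!\big(-(1+o(1))\sqrt{\tfrac{\log n\log\log n}{2}}\big).
\]
A Markov or Chernoff argument on the dominating i.i.d.\ variable then yields $S \le 2\E[S]$ \whp. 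Since secondary allocations are fresh uniform picks on $[n]$, $Y_i \mid S \sim \mathrm{Bin}(S,1/n)$. Applying the Chernoff tail $\Pr(\mathrm{Bin}(S,1/n) \ge k) \le (eS/(kn))^k$ with $k = (1+\eta)\ell$, a short computation matching the two asymptotics of size $\sqrt{\log n \log\log n/2}$ gives
\[
\Pr(Y_i \ge (1+\eta)\ell) \le n^{-(1+\eta)+o(1)}.
\]
A union bound over the $n$ bins now yields $\max_i Y_i \le (1+\eta)\ell$ \whp, hence $\Maxload^f_{\rho n}([n]) \le (2+\eta)\ell$. Letting $\eta \to 0$ slowly (e.g., $\eta = 1/\log\log n$) closes the upper bound at $(1+o(1))\sqrt{8\log n/\log\log n}$.

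For the lower bound on any strategy $g$, I would show $\Maxload^g_{\rho n}([n]) \ge (1-o(1))\sqrt{8\log n/\log\log n}$. The picture is that any strategy is, up to lower-order corrections, a (possibly history-dependent) threshold rule, and the optimal threshold balances two dangers: if the rejection rate is too low, primaries alone build a bin of load $(2-o(1))\ell$; if the rejection rate is too high, the induced uniform secondary mass is so large that some bin collects $(2-o(1))\ell$ secondaries. Matching these two tails at height $\ell$ delivers the lower bound.

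The main obstacle is pinning down the constant $8$: the two tail asymptotics $\Pr(\mathrm{Bin}(\rho n,1/n) \ge \ell)$ and $\Pr(\mathrm{Bin}(S,1/n) \ge \ell)$ must combine so as to cancel exactly one $\log n$, so any slack in either estimate degrades the constant. Separately, the concentration of $S$ is subtle because $S$ is a sum of strongly dependent indicators; I expect that dominating $S$ by a functional of the i.i.d.\ primary process, rather than analysing the rejection martingale directly, is the cleanest route.
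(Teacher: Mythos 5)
Your upper bound follows essentially the paper's route: bound the accepted primary allocations per bin by the threshold $\ell$, view the rejected balls' secondary allocations as a fresh uniform balls-in-bins process of random size $S$, first control $S$, then bound the maximum load of the secondary process and union-bound over bins. The paper implements the same plan using the Mitzenmacher--Upfal Poisson-comparison lemma in place of your direct binomial/Chernoff estimates, but the decomposition, the $\Pr(\mathrm{Bin}(\rho n,1/n)\ge\ell)\approx\ell^{-\ell}$ tail, and the cancellation through $\sqrt{\log n\log\log n/2}$ are the same. Two cautions: (i) the paper's $\ell$-threshold rejects based on the cumulative \emph{primary-suggestion} count $A_t(m)$, not on the current load $F_t(m)$, so ``once its load reaches $\ell$ every further primary is rejected'' is not literally the strategy analysed; the cap of $\ell$ accepted primaries per bin holds in either variant, but you should be explicit; (ii) your proposed $\eta=1/\log\log n$ is too small — the $o(1)$ error you absorb in the exponent is of order $\log\log\log n/\log\log n$, so $\eta$ must decay strictly slower (the paper uses $\eta=9\log\log\log n/\log\log n$).

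The lower bound is where you have a genuine gap. ``Any strategy is, up to lower-order corrections, a threshold rule'' and ``matching these two tails at height $\ell$'' are heuristics, not an argument, and the reduction of an arbitrary adaptive strategy $g$ to a threshold rule is exactly the hard part you do not address. The paper's actual lower bound is a recursive multi-stage argument: split the $\rho n$ balls into $s\approx(2-\eps)\ell$ stages of $w\approx\rho n/2\ell$ balls, track the shrinking family $S_k$ of bins that have received at least $k$ primary suggestions by stage $k$, and show via Lemma~\ref{lem : one step} that in each stage either $|S_{k+1}|\ge\zeta|S_k|$ (with $\zeta=\rho/8e\ell$), or some bin of $S_k$ collects $(2-\eps)\ell-k$ secondary balls from among the first $T\approx\zeta|S_k|$ rejections. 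The crucial dichotomy $E\cap F\subseteq E'\cup F'$ — ``few rejections'' forces the saturated set to grow, ``many rejections'' forces a heavy bin via secondaries — is what replaces your ``balance two dangers'' picture with a proof that applies to every $g$. Your proposal has no analogue of this staged set-growth, and I do not see how to close the lower bound without some version of it; merely optimizing over thresholds assumes a structure on $g$ that is not given.
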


\subsection{Discussion}

\textbf{Balls-and-bins, two-choices and two-thinning.}
It is well known that if each of $\lfloor\rho n\rfloor$ balls is allocated independently to a uniformly chosen random bin in $[n]=\{1,\dots, n\}$, then the most heavily loaded bin contains $\frac{\log n}{\log \log n}+O(1)$ balls with high probability. In their seminal paper, Azar, Broder, Karlin and Upfal \cite{ABKU} have shown that a significantly lower maximum load of $\log_2 \log n+O(1)$ balls could be achieved, with high probability, in a \emph{two-choices} setting, i.e., if the allocation of each ball is governed by an overseer who is offered a choice between two independent, uniformly chosen random bins. Moreover, the overseer can achieve this simply by following a na\"{\i}ve strategy of always selecting the less loaded of the two bins.

The two-thinning setting, considered in this paper, is intermediate between two-choices and no-choice, as it is equivalent to a two-choices setting in which the overseer is oblivious of the location of one of the two available bins. The name ``two-thinning'' is due to yet another point of view on this setting. According to this view an infinite sequence of allocations has been drawn independently and uniformly at random, and the overseer is allowed to thin it on-line (i.e., delete some of the allocations depending only on the past), as long as at most one of every two consecutive entries is deleted (for a more thorough discussion of the model see joint work with Ramdas and Dwivedi \cite{DFGR}, where the model was introduced).

From Theorem~\ref{thm:main} we see that the optimal maximum load under two-thinning is indeed intermediate between the maximum load without thinning and the maximum load in the two-choices setting.

\textbf{More choice.}
Already in \cite{ABKU}, Azar et al. showed that allowing the overseer choice between $k>2$ choices, reduces the asymptotic maximal load by merely a factor of $\log(k)$.
Nonetheless, we make the following conjecture.
 \begin{conjecture}
 In the two-thinning setting, allowing the overseer to iteratively reject up to $k$ suggested allocations for each ball will result in an improved asymptotically optimal maximum load of $$\Theta\left(\left(\frac{\log n}{\log\log n}\right)^{1/(k+1)}\right).$$
 \end{conjecture}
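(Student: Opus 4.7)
My plan is to establish the conjecture by extending the threshold-based strategy and analysis of Theorem~\ref{thm:main} to the $k$-rejection setting, and to adapt the lower bound machinery of \cite{DFGR} to depth-$(k+1)$ decision trees.

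The natural generalization is the \emph{multi-threshold} strategy with thresholds $\ell_1 < \ell_2 < \dots < \ell_k$: on the $j$-th sampled bin, reject iff its current load is at least $\ell_j$, and accept unconditionally on the $(k+1)$-th sample. I would take $L = C\left(\frac{\log n}{\log\log n}\right)^{1/(k+1)}$ for a sufficiently large constant $C$, and start with thresholds of the form $\ell_j = jL/(k+1)$, with the precise form to be refined from the recursion below. Writing $\beta_j$ for the final fraction of bins with load at least $\ell_j$ and $q_j = \prod_{i<j}\beta_i$ for the probability of reaching the $j$-th try (using the pessimistic final values of $\beta_i$, which dominate in time), the probability that a single ball is ultimately placed at a bin of current load $\ge \ell_{j-1}$ is bounded above by $\beta_{j-1}\sum_{i\ge j}q_i$, which for geometrically decaying $\beta_i$ is dominated by $\beta_{j-1}q_j$.

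Combining this with the deterministic relation $(\ell_j - \ell_{j-1})\cdot|B_{\ell_j}| \le N_{\ge \ell_{j-1}}$, where $N_{\ge \ell_{j-1}}$ denotes the number of balls placed at bins whose load was already at least $\ell_{j-1}$, one obtains a recursion bounding each $\beta_j$ in terms of $\beta_1,\dots,\beta_{j-1}$. Iterating this recursion $k+1$ times and tuning $C$ so that all level bounds are tight simultaneously should yield $\E|B_{\ell_{k+1}}| = o(1)$. To promote the expectation statement to a whp statement I would proceed inductively level by level: conditional on the good event $G_{j-1}$ that $|B_{\ell_{j-1}}|$ is close to its expectation, the process $(|B_{\ell_j}(t)|)_{t\le\rho n}$ has bounded martingale differences, so an Azuma--Hoeffding or Freedman-type tail bound applies, and a union bound over the $k+1$ levels carries the argument through.

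For the matching $\Omega\bigl((\log n/\log\log n)^{1/(k+1)}\bigr)$ lower bound, any $(k+1)$-choice strategy is an adaptive decision tree of depth $k+1$ per ball. My plan is to adapt the two-thinning lower bound of \cite{DFGR}: since each rejection can shrink the conditional probability of placement at a heavy bin only by a factor equal to the current heavy fraction $\beta$, after $k$ rejections the residual probability of landing at a heavy bin is at least $\beta^{k+1}$. Converting this into a constant-probability existence of a bin with load $\ge c\, (\log n/\log\log n)^{1/(k+1)}$ would then be done via a second-moment / Poisson-tail calculation on the expected number of bins reaching that height. The principal obstacle, I expect, lies in this lower bound: the upper-bound analysis is largely mechanical once the recursion closes, but ruling out \emph{all} adaptive strategies---not merely threshold-type ones---requires a coupling or entropy argument that faithfully extends the depth-$2$ construction of \cite{DFGR} to arbitrary depth $k+1$. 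On the upper-bound side the main subtlety is guessing the correct form of the thresholds $\ell_j$ (linear, geometric, or more exotic spacing) and verifying that the recursion actually closes after exactly $k+1$ levels to match the conjectured exponent.
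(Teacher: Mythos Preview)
The statement you are addressing is a \emph{conjecture} in the paper, not a theorem: the authors explicitly label it as such and offer no proof, sketch, or heuristic argument for it anywhere in the text. There is therefore nothing in the paper to compare your proposal against. What you have written is not a proof but a research plan for attacking an open problem, and you yourself flag the main obstacles (closing the threshold recursion at exactly the right exponent on the upper-bound side, and extending the depth-$2$ lower-bound machinery of \cite{DFGR} to arbitrary depth on the lower-bound side) as unresolved.

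As a plan it is reasonable in spirit---a multi-threshold strategy is the natural candidate, and a level-by-level martingale concentration argument is the expected tool---but several steps are genuinely nontrivial and not merely ``mechanical.'' In particular, the recursion you write down uses the \emph{final} heavy fractions $\beta_i$ as if they were available throughout the process, whereas in reality the $\beta_i$ evolve in time and the probability of reaching the $j$-th try depends on the instantaneous loads; making this rigorous typically requires either a layered-induction or a careful coupling, not just Azuma--Hoeffding on a single counter. On the lower-bound side, the claim that ``after $k$ rejections the residual probability of landing at a heavy bin is at least $\beta^{k+1}$'' is not obviously true for arbitrary adaptive strategies, since the overseer's rejection rule can correlate the $k+1$ samples in ways that are not captured by a simple product bound. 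These are precisely the points where a genuine proof would need new ideas beyond what is in the present paper.
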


\textbf{More balls.}
Berenbrink, Czumaj, Steger and V\"ocking \cite{BCSV} have considered the power of two choices in the heavily loaded case of the balls and bins model,
that is, when $\omega(n)$ balls are allocated into $n$ bins. They showed that in this case under the power of $k$-choices, the deviation of the maximum load from the average load is asymptotically almost surely
${\log_k\log n}+O(1)$ (see Talwar and Wieder \cite{TW}, for a simpler proof).
We conjecture that the same phenomenon will occur for two-thinning. Namely,
 \begin{conjecture}
 In the two-thinning setting, where $m=\Omega(n)$ balls are two-thinned, the asymptotically optimal maximum load
is $\frac{m}{n}+\Theta\left(\sqrt{\frac{\log n}{\log\log n}}\right)$.
 \end{conjecture}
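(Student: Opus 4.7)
The plan is to decompose the theorem into three assertions: (i) an upper bound showing the $\ell$-threshold strategy, with $\ell=\sqrt{2\log n/\log\log n}$, achieves maximum load at most $2\ell(1+o(1))$ whp; (ii) a matching lower bound for the same strategy; and (iii) the optimality claim --- every two-thinning strategy produces maximum load at least $2\ell(1-o(1))$ whp. Throughout I write the load of bin $b$ as $L_b=A_b+S_b$, where $A_b$ counts accepted primary allocations to $b$ and $S_b$ counts secondary allocations landing in $b$; I write $P_b$ for the total number of primary suggestions to bin $b$, so that $A_b\le P_b$ in general and $A_b=\min(P_b,\ell)$ under the threshold rule.

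For (i), the threshold rule gives $A_b\le\ell$ deterministically, so the task is to bound $\max_b S_b$. Using Poissonization I would treat the $(P_b)_b$ as independent Poisson$(\rho)$; a Stirling-type tail estimate yields
\[
\E[R]\;=\;n\cdot\E[(P_b-\ell)^+]\;=\;n\mu(1+o(1)),\qquad \log(1/\mu)\;=\;\tfrac12\,\ell\log\log n\,(1+o(1)),
\]
where $R$ is the number of rejections, and a bounded-differences martingale shows $R$ concentrates at $n\mu$. Conditional on $R$, the secondaries are i.i.d.\ uniform, so each $S_b$ is nearly Poisson$(\mu)$; for $k=(1+\delta)\ell$ one finds $k\log(k/\mu)\sim(1+\delta)\ell\cdot\tfrac12\ell\log\log n=(1+\delta)\log n$, whence $\Pr[S_b\ge k]=n^{-(1+\delta)+o(1)}$. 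A union bound over $n$ bins then yields $\max_b S_b\le(1+o(1))\ell$ whp and so $\max_b L_b\le (1+o(1))\sqrt{8\log n/\log\log n}$.

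For (ii), whp there are $|H|=(1+o(1))n\mu$ heavy bins $H=\{b:P_b\ge\ell\}$; under the threshold rule each has $A_b=\ell$. Since $\log(n\mu)=\log n\,(1-o(1))$, the tail equation $k\log(k/\mu)=\log(n\mu)$ gives $k=(1-o(1))\ell$, and a Paley--Zygmund argument on the (negatively correlated) indicators $\{\mathbbm{1}[S_b\ge k]:b\in H\}$ produces $b^*\in H$ with $S_{b^*}\ge(1-o(1))\ell$, hence $L_{b^*}\ge 2\ell(1-o(1))$. For (iii), the idea is that $H$ depends only on the primary sequence and not on the strategy $g$, so $|H|=(1+o(1))n\mu$ regardless of $g$. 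I would proceed by dichotomy: partition $H=H_1\cup H_2$ with $H_1=\{b\in H:A_b\ge(1-\eps)\ell\}$. If $|H_1|\ge\eta|H|$ for some $\eta>0$, apply the second-moment argument of (ii) to $H_1$ to find $b^*\in H_1$ with $S_{b^*}\ge(1-o(1))\ell$, giving $L_{b^*}\ge(2-O(\eps))\ell$; if instead $|H_2|$ dominates, then every bin in $H_2$ contributes at least $\eps\ell$ to $R$, so $R\ge\eta|H|\eps\ell=\Omega(n\mu\ell)$, driving the secondary rate up and pushing some $S_b$ above the target by the same Poisson-tail calculation.

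The main obstacle is part (iii). Parts (i) and (ii) reduce cleanly to Poissonization together with standard Poisson-tail and Paley--Zygmund estimates. Part (iii) is more delicate: a naive execution of the dichotomy above only yields a bound of $(2-O(\sqrt{\eps}))\ell$ rather than $(2-o(1))\ell$, because an adaptive strategy may steer acceptances at heavy bins in response to the evolving secondary distribution, and the two regimes in the dichotomy are not quite dual in the right way. To recover the sharp constant $\sqrt{8}$ one likely needs either a sharper joint analysis of the pair $(A_b,S_b)$ on $H$ --- for instance a convexity/potential estimate of the form $\E\sum_b(1+\gamma)^{L_b}$ with a well-chosen $\gamma$, or a coupling/exchangeability argument that removes the strategy's ability to exploit history on the bins of $H$. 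I expect this is where most of the technical work of the paper lies.
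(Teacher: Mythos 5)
The statement you were asked to address is the paper's second \emph{conjecture}, not a theorem, so there is no ``paper proof'' to compare your proposal against: the authors explicitly leave it open. The conjecture concerns the heavily loaded regime $m=\Omega(n)$, and its new content is the case $m=\omega(n)$, where the claim is that the maximum load exceeds the running average $m/n$ by $\Theta\big(\sqrt{\log n/\log\log n}\big)$. Your proposal, by contrast, is an outline of the paper's Theorem~1, the $m=\lfloor\rho n\rfloor$ case with $\rho$ fixed: the quantity $m/n$ and any dependence on $m$ are absent from your argument, the Poisson rate $\rho$ is a constant throughout, and the threshold $\ell=\sqrt{2\log n/\log\log n}$ on the \emph{absolute} primary count of a bin is meaningful only when the average is $O(1)$. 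When $m/n\to\infty$ the primary loads $P_b$ all concentrate around $m/n$, no fixed absolute cutoff on $P_b$ singles out ``heavy'' bins, and the framework of $H=\{b:P_b\ge\ell\}$ and the quantity $\mu=\Pr[P_b\ge\ell]$ collapses. One would instead have to run the whole analysis relative to the moving mean, which is exactly why the heavily loaded two-choices problem required new machinery (witness trees in Berenbrink--Czumaj--Steger--V\"ocking, exponential potential functions in Talwar--Wieder). Your plan does not touch that difficulty, so it is not a proof route for the stated conjecture.

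Even read as a proposal for Theorem~1, there is a gap that you yourself flag in part~(iii). Your parts (i)--(ii) broadly match the paper's upper-bound analysis (Poissonize the primaries via the Mitzenmacher--Upfal comparison lemma, bound the number of rejections, then Poisson-tail plus union bound on secondaries), but the paper's lower bound for arbitrary strategies is not a dichotomy argument. Instead, the $\rho n$ balls are split into roughly $2\ell$ stages of $\rho n/2\ell$ balls each; one tracks $S_k$, the set of bins with at least $k$ primary suggestions after $k$ stages, and shows per stage that with probability $1-\exp(-n^{\Omega(1)})$ either $|S_{k+1}|\ge\zeta|S_k|$ for a fixed $\zeta=\Theta(1/\ell)$, or enough primaries were rejected in the stage that a secondary pile-up already forces a high bin. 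Iterating for $s=(2-\eps)\ell$ stages keeps $|S_s|\ge n\zeta^s=n^{1-o(1)}$ nonempty, and any bin in $S_s$ has at least $s$ primaries, hence load at least $(2-\eps)\ell$, \emph{regardless} of which arrivals the strategy accepted. This stage-by-stage induction sidesteps the adaptive-steering issue precisely because it never tries to control where acceptances go: it only needs to argue that a strategy cannot reject too much in each stage without creating a high bin elsewhere. Your Paley--Zygmund/dichotomy approach would need exactly such a mechanism to recover the sharp constant, which is why you only get $(2-O(\sqrt\eps))\ell$; the multi-stage rejection-accounting is the missing idea.
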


\textbf{1+$\beta$-thinning.} In his thesis \cite{Mitzenmacher}, Mitzenmacher suggested considering a variant of the power of two-choices in which,
for each allocation independently, there is some small probability that a decision opposite to that made by the overseer will be executed. This notion was recently formulated and studied by Peres, Talwar and Wieder \cite{PTW}, viewing it as having two-choices with probability $\beta$ and no-choice with probability $(1-\beta)$, independently for every ball.
Once errors of this nature are introduced to the model, two-choices and one-retry are equivalent up to a parameter change, and in lightly loaded case of $\lfloor\rho n\rfloor$
 balls allocated into $n$ bins, both offer no improvement over having no-choice at all (see \cite{DFGR} for more details).

\section{Preliminaries} \label{sec:preliminaries}

We take advantage of a comparison lemma of Mitzenmacher and Upfal \cite[Corollary 5.11]{MU}, which we reproduce here, relating the balls-and-bins model with independent Poisson random variables.
Denote by $\N_0$ the set of natural numbers together with $0$. Given two vectors $x,y\in (\N_0)^n$ we write $x\le y$ if $x_i \le y_i$ for all $i\in [n]$. A set $S\subset (\N_0)^n$ is called \emph{monotone decreasing (increasing)} if $x\in S$ implies $y\in S$ for all $y\le x\ (y\ge x)$.

\begin{lemma}[Mitzenmacher and Upfal]\label{Lem:mu}
Let $(X_m)_{m\in[n]}$ be the number of balls in the $m$-th bin when $t$ balls are independently and uniformly allocated into $n$ bins. Further let $(Y_m)_{m\in[n]}$ be independent Poisson$(\frac{t}{n})$ random variables, and let $S$ be a monotone set (either increasing or decreasing). Then
$$\Pr\Big((X_1,\dots,X_n)\in S \Big) \le 2\Pr\Big((Y_1,\dots,Y_n)\in S\Big).$$
\end{lemma}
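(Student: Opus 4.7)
The plan is to exploit the classical coupling between independent Poissons and the multinomial (balls-and-bins) distribution by conditioning on the total count. The first step is to observe that if $Y_1,\dots,Y_n$ are independent Poisson$(t/n)$, then their sum $T=\sum_m Y_m$ is Poisson$(t)$, and conditional on $T=k$ the joint law of $(Y_1,\dots,Y_n)$ is exactly that of $(X_1^{(k)},\dots,X_n^{(k)})$, the balls-and-bins occupancies after allocating $k$ balls. This standard Poissonization identity yields
\[
\Pr\big((Y_1,\dots,Y_n)\in S\big) \;=\; \sum_{k\ge 0}\Pr(T=k)\,f(k),
\]
where $f(k):=\Pr\big((X_1^{(k)},\dots,X_n^{(k)})\in S\big)$.

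The second step is a one-line monotonicity observation: couple the $k$-ball allocation to the $(k{+}1)$-ball allocation by tossing one additional independent ball; each coordinate can only weakly increase. Hence $f$ is non-decreasing in $k$ when $S$ is monotone increasing, and non-increasing in $k$ when $S$ is monotone decreasing.

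Combining the two, for monotone increasing $S$,
\[
\Pr\big((Y_1,\dots,Y_n)\in S\big) \;\ge\; \sum_{k\ge t}\Pr(T=k)\,f(k) \;\ge\; f(t)\,\Pr(T\ge t),
\]
and for monotone decreasing $S$ the symmetric inequality with $\Pr(T\le t)$ holds. Rearranging gives
\[
\Pr\big((X_1,\dots,X_n)\in S\big)\;\le\;\frac{\Pr\big((Y_1,\dots,Y_n)\in S\big)}{\min\big(\Pr(T\ge t),\,\Pr(T\le t)\big)}.
\]

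The only real obstacle is controlling the denominator: the factor $2$ comes from the classical analytic fact that a Poisson distribution with positive integer mean $t$ admits $t$ as a median, so both $\Pr(T\ge t)\ge \tfrac12$ and $\Pr(T\le t)\ge \tfrac12$. This is the one non-trivial ingredient, but it is a well-known (and elementary) statement about Poisson medians; with it in hand, the claimed inequality follows immediately.
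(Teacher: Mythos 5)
Your proposal is correct. Note, however, that the paper does not actually prove Lemma~\ref{Lem:mu}; it reproduces it as Corollary~5.11 from Mitzenmacher and Upfal's textbook \cite{MU}, so there is no ``paper proof'' to compare against. Your argument -- Poissonization by conditioning on the total $T=\sum_m Y_m$, the monotone coupling giving monotonicity of $k\mapsto f(k)$, and the fact that a Poisson random variable with positive integer mean $t$ has $t$ as a median so that both one-sided tails at $t$ are at least $\tfrac12$ -- is exactly the standard proof given in that textbook, so the factor $2$ is obtained in the same way. The one small point worth making explicit is that the Poisson-median step uses that $t$ is a positive integer, which holds here since $t$ is a number of balls.
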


We also utilise two corollaries of this lemma.

\begin{lemma}\label{lem:retries lead to high bin}
Let $(X_m)_{m\in[n]}$ be the number of balls in the $m$-th bin when $(\theta n)$-balls are independently and uniformly allocated into $n$-bins, for $\theta\in[0,1]$. Then, for any $a \in [\theta n]$ and $S\subset[n]$ we have
$$\Pr\left(\max_{m\in S}(X_m)<a \right) \le 2\exp\left(-\frac{\theta^a|S|}{e a!}\right)$$
\end{lemma}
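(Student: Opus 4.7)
The plan is to apply the Mitzenmacher--Upfal comparison directly. The event $\{\max_{m\in S} X_m < a\}$ coincides with $\{X_m < a\text{ for all }m\in S\}$, which, viewed as a subset of $(\N_0)^n$, is clearly monotone decreasing: if $x_m<a$ for every $m\in S$ and $y\le x$ coordinatewise, then $y_m<a$ for every $m\in S$ as well. Hence Lemma~\ref{Lem:mu} applies with $t=\theta n$, and the probability in question is at most $2\Pr(\max_{m\in S}Y_m<a)$ where the $Y_m$ are i.i.d.\ $\mathrm{Poisson}(\theta)$ variables.

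By independence this factors as $\Pr(Y<a)^{|S|}$ with $Y\sim\mathrm{Poisson}(\theta)$, so it suffices to give a clean lower bound on $\Pr(Y\ge a)$. The cheapest one is to retain only the mass at $a$:
\[
\Pr(Y\ge a)\ \ge\ \Pr(Y=a)\ =\ \frac{e^{-\theta}\theta^a}{a!}\ \ge\ \frac{\theta^a}{e\,a!},
\]
where the last inequality uses $\theta\in[0,1]$, so that $e^{-\theta}\ge e^{-1}$. Consequently $\Pr(Y<a)\le 1-\frac{\theta^a}{e\,a!}$, and the standard inequality $1-x\le e^{-x}$ gives
\[
\Pr(Y<a)^{|S|}\ \le\ \exp\!\Bigl(-\tfrac{\theta^a|S|}{e\,a!}\Bigr).
\]
Multiplying by the factor of $2$ from Lemma~\ref{Lem:mu} yields the claimed bound.

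There is no real obstacle here; the only thing worth double-checking is the monotonicity of the event (to legitimately invoke Lemma~\ref{Lem:mu}) and the fact that $\theta\in[0,1]$ is used exactly once, to absorb $e^{-\theta}$ into the constant $e^{-1}$. The bound is loose by design: we throw away all mass of $Y$ above $a$, which is what allows the final expression to depend only on $\theta$, $a$, and $|S|$ in the simple form needed later.
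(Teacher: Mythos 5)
Your proof is correct and follows essentially the same route as the paper's: apply the Mitzenmacher--Upfal Poissonization, factor by independence, lower-bound $\Pr(Y\ge a)$ by the single term $\Pr(Y=a)\ge \theta^a/(e\,a!)$, and finish with $1-x\le e^{-x}$. The only difference is that you explicitly check the monotonicity of the event, a step the paper leaves implicit.
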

\begin{proof}
Let $\{Y_m\}_{m\in [n]}$ be i.i.d. Poisson($\theta$) random variables. By Lemma~\ref{Lem:mu} we have
\begin{align*}
\Pr\left(\max_{m\in S}(X_m)<a\right) &\le 2\, \Pr\left(\max_{m \in S} (Y_m) < a\right) = 2 \Pr\left(Y_1 < a\right)^{|S|} \le 2\left(1-e^{-\theta} \frac{\theta^a}{a!}\right)^{|S|} \\
&\le 2\left(1- \frac{\theta^a}{ea!}\right)^{|S|}\le 2\exp\left(-\frac{ \theta^a|S| }{ea!}\right)\ .
\end{align*}
\end{proof}

\begin{lemma}\label{lem:saturation without retry}
Let $(X_m)_{m\in[n]}$ be the number of balls in the $m$-th bin when $(\theta n)$-balls are independently and uniformly allocated into $n$-bins, for  $\theta\in[0,1]$. Then, for any $S\subset[n]$ we have
$$\Pr\left(|\{m\in S : X_m>0\}|\le \frac{\theta|S|}{2e }\right)\le2\exp\left(-\frac{\theta^{2}|S|}{2e^2}\right)\ .$$
\end{lemma}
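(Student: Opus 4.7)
The plan is to follow the same Poissonization strategy used in the proof of Lemma~\ref{lem:retries lead to high bin}. First, I would observe that for any threshold $c\ge 0$, the set $\{x\in(\N_0)^n : |\{m\in S : x_m > 0\}|\le c\}$ is monotone decreasing, because reducing any coordinate can only cause its positivity indicator to drop. Hence Lemma~\ref{Lem:mu} applies and the left-hand side is at most twice the probability of the analogous event when $(X_m)_{m\in[n]}$ is replaced by i.i.d.\ Poisson$(\theta)$ variables $(Y_m)_{m\in[n]}$.

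In the Poissonized model the indicators $\ind\{Y_m>0\}$ are i.i.d.\ Bernoulli with parameter $p:=1 - e^{-\theta}$. A short calculus check shows that $1-e^{-\theta}\ge \theta/e$ for every $\theta\in[0,1]$ (the function $\theta\mapsto 1-e^{-\theta}-\theta/e$ vanishes at $0$ and has derivative $e^{-\theta}-1/e\ge 0$ on $[0,1]$), so $Z:=\sum_{m\in S}\ind\{Y_m>0\}$ has mean $\E Z=p|S|\ge \theta|S|/e$. In particular the threshold $\theta|S|/(2e)$ is at most $\E Z/2$, so it suffices to bound the probability that $Z$ deviates below its mean by at least $t:=\theta|S|/(2e)$.

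To finish, I would apply Hoeffding's inequality: since $Z$ is a sum of $|S|$ independent $[0,1]$-valued summands, $\Pr(Z\le \E Z - t)\le \exp(-2t^2/|S|)$, and substituting $t=\theta|S|/(2e)$ gives exactly $\exp(-\theta^2|S|/(2e^2))$. Combined with the factor of $2$ lost to Poissonization, this matches the claim. The hard part, such as it is, is merely the choice of tail inequality: the $\theta^2$ appearing in the target exponent rules out a naive multiplicative Chernoff bound (which would only deliver $\exp(-c\theta|S|)$ in the regime $\theta\to 0$) and forces the use of Hoeffding, whose quadratic-in-$t$ form is perfectly calibrated to produce the stated constant $1/(2e^2)$.
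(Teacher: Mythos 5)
Your proof is correct and follows essentially the same route as the paper: Poissonize via Lemma~\ref{Lem:mu} (after noting the event is monotone decreasing), lower bound the Bernoulli parameter by $\theta/e$, and apply Hoeffding's inequality to the sum of indicators. The only cosmetic difference is that you verify $1-e^{-\theta}\ge\theta/e$ by calculus and keep the exact mean $p|S|$ before invoking Hoeffding, whereas the paper plugs $\theta/e$ directly into the binomial Hoeffding bound; both yield $\exp(-\theta^2|S|/(2e^2))$.
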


\begin{proof}
Let $\{Y_m\}_{m\in[n]}$ be i.i.d. Poisson($\theta$) random variables. By Lemma~\ref{Lem:mu}
$$\Pr\left( |\{m\in S : X_m>0\}| \le \frac{\theta|S|}{2e} \right)\le 2 \Pr\left(|\{m\in S : Y_m>0|\le \frac{\theta|S|}{2e}\right).$$
We observe that $\Pr(Y_1 >0)\ge \frac{\theta}{e}$. Using Hoeffding bound for the tail of binomial distributions (see, e.g. \cite[Proposition 1.12]{Dudley}), we obtain,
\begin{align*}
\Pr\left(|\{m\in S : Y_m>0\}|\le \frac{|S|\theta}{2e}\right)\le
\exp\left(-2|S|\left(\frac{\theta}{e}-\frac{\theta}{2e}\right)^2\right)
=\exp\left(-\frac{\theta^{2}|S|}{2e^2}\right).
\end{align*}

\end{proof}

%
%
%
\section{Notation}
Given a thinning strategy $f$, generate $\{Z_t\}_{t\in\N_0}$, the sequence of allocations, in the following way. Let $\{Z^0_t\}_{t\in\N_0}$ and $\{Z^1_t\}_{t\in\N_0}$ be two sequences of independent random variables uniformly distributed in $[n]$. Here $Z^0_t$ represents the primary allocation of the $t$-th ball, while $\{Z^1_t\}_{t\in\N_0}$ is used as a pool of secondary allocations.
Denote by $r_t$ the number of rejections among the first $(t-1)$ primary allocations.
For the $t$-th allocation, look at the history of the process up to time $(t-1)$ and at $Z^0_t$ and apply $f$ to determine whether to accept or reject the primary allocation. If the primary allocation is accepted then set $Z_t$ to be $Z^0_t$ while if it is rejected, then set $Z_t$ to be $Z^1_{r_t}$.

We introduce the following notation. For any $t\le \rho n$, $m\in [n]$ denote
\begin{align*}
F_t(m)&=|\{1\le i \le t\ :\ Z_i=m\}|,\\
A_t(m)&=|\{1\le i \le t\ :\ Z^0_i=m\}|, \\
B_t(m)&=|\{1\le i \le t\ :\ Z^1_i=m\}|.
\end{align*}
In addition, for $\ell\in \N_0$ and for $S\subset [n]$ denote
\begin{align*}
\phi_{t}^\ell(S)&=|\{m\in S\ :\ F_t(m)\ge \ell\}|\\
\alpha_{t}^\ell(S)&=|\{m\in S\ :\ A_t(m)\ge \ell\}|\\
\beta_{t}^\ell(S)&=|\{m\in S\ :\ B_t(m)\ge \ell\}|,
\end{align*}
setting $\phi_{t}^\ell=\phi_{t}^\ell([n])$,
$\alpha_{t}^\ell=\alpha_{t}^\ell([n])$,
$\beta_{t}^\ell=\beta_{t}^\ell([n])$.
Finally, denote
\begin{align*}
\Maxload^f_t(S)&=\max_{m\in S} F_t(m)\\
\Maxload^f_t&=\Maxload^f_t([n]).
\end{align*}

\section{Upper bound on \texorpdfstring{$\Maxload_{\rho n}^f([n])$}{Maxloadfn}}
For $n\ge 3$, denote $L=\left\lceil\sqrt{2\log n /\log\log n}\right\rceil$. Let $f$ be the $L$-threshold strategy, i.e., the one for which $f_i=1$ if and only if $A_i(Z^0_i)\ge L$. The main statement of this section is the following.
\begin{propos}\label{props : upper bound on maxload}
For any $n\ge n(\rho)$ sufficiently large and any $\eta>0$ the strategy $f$ satisfies
\[\Prob\big(\Maxload_{\rho n} > (2+\eta)L \big)\le 2n^{-\frac{\eta}{4}+\frac{2\log \log \log n}{\log\log n}}+2e^{-\sqrt n}.\]
\end{propos}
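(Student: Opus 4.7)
The approach is to decompose each bin's load into its accepted primary allocations and its secondary allocations, and to bound the two contributions separately using the Poisson comparison of Lemma~\ref{Lem:mu}. Since the $L$-threshold rule admits at most $L-1$ primary hits per bin, for every $m\in[n]$ we have
\[F_{\rho n}(m)\;\le\;(L-1)+B_{r_{\rho n}}(m),\]
so $\Maxload_{\rho n}\le L+\max_m B_{r_{\rho n}}(m)$, and it suffices to bound $\Pr(\max_m B_{r_{\rho n}}(m)>(1+\eta)L)$. The two ingredients are an upper bound on the total number of rejections $r_{\rho n}$, and a conditional upper bound on $\max_m B_R(m)$ given such a rejection bound.

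For the first ingredient, observe that
\[r_{\rho n}\;=\;\sum_{m\in[n]}(A_{\rho n}(m)-L+1)^{+}\]
is a monotone-increasing function of the primary-count vector, so $\{r_{\rho n}\ge R\}$ is a monotone set and Lemma~\ref{Lem:mu} dominates its probability (up to a factor of $2$) by that of the iid sum $r'_{\rho n}:=\sum_m(Y_m-L+1)^{+}$ with $Y_m\sim\mathrm{Poisson}(\rho)$. A direct expansion gives $\mu:=\E[(Y_1-L+1)^{+}]=(1+o(1))\rho^L e^{-\rho}/L!$, and Stirling with $L=\lceil\sqrt{2\log n/\log\log n}\rceil$ yields $\log\mu=-(1+o(1))\tfrac12\sqrt{2\log n\,\log\log n}$, so in particular $n\mu\gg\sqrt n$. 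Setting $R:=2n\mu$ and applying a Chernoff or Bernstein bound to the iid sum $r'_{\rho n}$ then produces $\Pr(r_{\rho n}\ge R)\le 2e^{-cn\mu}\le 2e^{-\sqrt n}$, which supplies the first term in the stated bound.

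For the second ingredient, note that $B_r(m)$ is non-decreasing in $r$, so on the event $\{r_{\rho n}\le R\}$ we have $B_{r_{\rho n}}(m)\le B_R(m)$ for every $m$. Since $(Z^1_i)$ is iid uniform and independent of the primary randomness, a second application of Lemma~\ref{Lem:mu} to the monotone set $\{\max_m B_R(m)\ge k\}$, combined with the Poisson tail and a union bound over bins, yields
\[\Pr\bigl(\max_m B_R(m)\ge k\bigr)\;\le\;2n\cdot\frac{(R/n)^k}{k!}.\]
Plugging in $k=\lceil(1+\eta)L\rceil$ and $R/n=2\mu\asymp\rho^L/L!$ and Stirling-expanding both $L!$ and $k!$ gives $k\log(R/n)=-(1+\eta)\log n+O(\sqrt{\log n\log\log n})$ and $\log k!=O(\sqrt{\log n\log\log n})$, so the right-hand side is at most $2n^{-\eta/4+2\log\log\log n/\log\log n}$ for all $n$ large enough, accounting for the second term.

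A union bound over $\{r_{\rho n}>R\}$ and $\{\max_m B_R(m)>(1+\eta)L\}$ then closes the argument. The main technical obstacle is the bookkeeping in the second step: Stirling must be applied simultaneously to $L!$ (hidden inside the rate $R/n$) and to $k!$, and the sub-leading $\log\log\log n/\log\log n$ corrections tracked with care, since this is what ultimately fixes the precise constant $\eta/4$ in the polynomial exponent; Step~1, by contrast, is fairly routine once one recognises that $r_{\rho n}$ is a monotone function of the primary counts.
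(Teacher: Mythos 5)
Your proposal follows essentially the same route as the paper's own proof: decompose each bin's load into the (at most $\approx L$) accepted primaries plus the secondary hits $B_{r}(m)$, use the Poisson comparison (Lemma~\ref{Lem:mu}) once to control the total rejection count $r_{\rho n}$ and once more (with a union bound) to control $\max_m B_R(m)$, then combine. The only cosmetic differences are an off-by-one in where the threshold $L$ enters and your use of a Chernoff--Bernstein tail bound on $\sum_m(Y_m-L+1)^+$ where the paper instead applies Markov directly to $\exp\bigl(\sum_m(Y_m-L)^+\bigr)$; both give the same $e^{-\sqrt n}$ term and the remaining Stirling bookkeeping is identical.
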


Let us begin by reducing the upper bound in theorem~\ref{thm:main} to this proposition.
\begin{proof}[Proof of the upper bound in Theorem~\ref{thm:main}]
We apply Proposition~\ref{props : upper bound on maxload} with $\eta = \frac{9\log\log\log n}{\log\log n}$. Observe that $\eta=o(1)$ and by the proposition we have
$$\Prob\big(\Maxload_{\rho n} > (2+\eta)L \big) \le \exp\left(-\frac{\log n}{4\log\log n}\right)+2e^{-\sqrt n}=o(1).$$
\end{proof}

\begin{proof}[Proof of Proposition~\ref{props : upper bound on maxload}]
Denote $r=r_{\lfloor \rho n\rfloor}$.
Our strategy $f$  guarantees that $A_{\rho n}(m)\le L$ for all $m\in [n]$. Hence, under this strategy
\begin{equation} \label{boundLB0}
\Prob\big(\Maxload_{\rho n} \ge (2+\eta)L\big)\le \Prob\left(\max_{m\in[n]} B_{r}(m) \ge L+\eta L\right) = \Prob\big(\beta_{r}^{L+\eta L}>0\big) \ .
\end{equation}

Let $L'=\lceil L+\eta L \rceil$. Notice that if $\beta^{L'}_{r}>0$ then for any $0\le k\le \rho n$, either $r>k$ or $\beta_{k}^{L'}>0$. Hence, for any $0\le k\le \rho n$ we get
\begin{equation}\label{eq: boundLB}
\Prob\big(\Maxload_{\rho n} \ge (2+\eta)L \big)\le\Pr\Big(r>k \Big)+\Pr\Big(\beta_{k}^{L'}>0 \Big) \ .
\end{equation}

We now bound the two probabilities on the right hand side.

To bound $\Pr(r>k)$, let $\{Y_m\}_{m\in [n]}$ be i.i.d. Poisson($\rho$) random variables and write $$Y:=\sum_{m\in [n]}\max(Y_m-L,0).$$
By Lemma~\ref{Lem:mu} we have
\begin{equation}\label{eq: r n first bound}
\Pr(r>k)\le2\Pr\Big(Y>k\Big).
\end{equation}
For a single Poisson($\rho$) random variable we have for $n\ge 100$,
$$\E\Big(e^{\max(Y_1-L,0)}\Big)\le 1+e^{-\rho} \sum_{\ell=1}^{\infty} \frac{\rho^\ell e^{\ell}}{(L+\ell)!}\le 1+\frac{1}{L!}<\exp\left(\frac{1}{L!}\right) \ .$$
Hence, by Markov's inequality, for $k\ge \frac{2n}{L!}$ we have
\begin{equation}\label{eq: r n second bound}
\Pr\Big(Y>k\Big)=\Pr\Big(e^Y > e^k \Big)\le\exp\left(\frac{n}{L!} - k\right)<\exp\left(-\frac{n}{L!}\right)<e^{-\sqrt n}.
\end{equation}
putting together \eqref{eq: r n first bound} and \eqref{eq: r n second bound} we obtain
\begin{equation}\label{eq:Fk bound}
\Pr(r>k)<2e^{-\sqrt n}.
\end{equation}

Next we bound $\Pr\Big(\beta_{k}^{L'}>0 \Big)$.
Let $\{Y_m\}_{m\in [n]}$ be i.i.d. Poisson$(k/n)$ random variables.
By Lemma~\ref{Lem:mu} we have,
$$\Pr\Big(\beta_{k}^{L'}>0 \Big)\le 2\Pr\Big(\max_{m\in [n]}(Y_m)>L'\Big).$$
For $k\le \frac{3n}{L!}$ and $n\ge 100$ we have
 $$\Pr\Big(Y_1>L'\Big)=e^{-k/n} \sum_{\ell=L'+1}^\infty \frac{(k/n)^{\ell}}{\ell!} \le
 \sum_{\ell=L'+1}^\infty \left(\frac{3}{L!}\right)^{\ell}\le \left(\frac{3}{L!}\right)^{L'}.$$
Taking a union bound, we obtain
\begin{equation}
\Pr\Big(\beta_{k}^{L'}(L')>0\Big)\le 2 n \left(\frac{3}{L!}\right)^{L'} .
\end{equation}
By Stirling's approximation for all $\ell>1$ we have
$\ell!> 3\left(\frac{\ell}e\right)^\ell$. Hence,
\begin{align}
\Pr\Big(\beta_{k}^{L'}>0\Big)&\le 2 n \left(\frac{3}{L!}\right)^{L'}\le 2n\left(\frac{L}{e}\right)^{-LL'} \le 2n\left(\frac{L}{e}\right)^{-(1+\eta)L^2}\notag\\
 &\le 2\exp\Big(\log n -(1+\eta)L^2\big(\log L-1\big)\Big)\notag\\
 &\le2\exp\left(\log n-(1+\eta)\frac{2\log n}{\log\log n}\left(\frac12 \log\log n-\frac12\log\log\log n-1\right)\right)\notag\\
 &\le2\exp\left(-\eta\log n+(1+\eta)\frac{2\log n}{\log\log n}\left(\frac12\log\log\log n+1\right)\right)\notag\\
 &\le2\exp\left(-\eta\log n+(1+\eta)\frac{2\log n \log \log \log n}{\log\log n}\right)\notag\\
 &\le 2n^{-\frac{\eta}{4}+\frac{2\log \log \log n}{\log\log n}},
 \label{eq:Ek bound}
\end{align}
for any $n\ge 100$. Putting \eqref{eq:Fk bound} and \eqref{eq:Ek bound}
into \eqref{eq: boundLB}, the proposition follows.
\end{proof}

\section{Lower bound on \texorpdfstring{$\Maxload_{\rho n}^g([n])$ for any strategy $g$}{Maxloadfn}}

Let $\ell=\ell(n)=\left\lfloor\sqrt{2\log n /\log\log n}\right\rfloor$. In this section we prove the following proposition, from which the lower bound in Theorem~\ref{thm:main} is an immediate corollary.

\begin{propos}\label{props : lower bound on maxload}
Let $\eps,\rho>0$ and $n$ sufficiently large (depending on $\rho$ and $\eps$). For any strategy  $g$ we have
\[\Prob\big(\Maxload^g_{\rho n} < (2-\eps) \ell \big)\le \exp\left(-n^{\eps/5}\right) .\]
\end{propos}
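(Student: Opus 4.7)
The plan is to exploit the independence of the primary pool $\{Z^0_i\}$ and the secondary pool $\{Z^1_i\}$. Set $\ell' := \lceil (1-\eps/4)\ell\rceil$ and $T := 2\ell'$, so that $T \ge (2-\eps)\ell$ for $n$ sufficiently large; it thus suffices to bound $\Pr(\Maxload^g_{\rho n} < T)$.

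First, combining Lemma~\ref{Lem:mu} with a Chernoff bound on the Poissonised indicator sum, I will show that both $H := \{m : A_{\rho n}(m) \ge \ell'\}$ and $Q := \{m : B_{\rho n}(m) \ge \ell'\}$ have size at least $N_1 = n^{1-o(1)}$ with probability at least $1 - \exp(-n^{\eps/4})$. The asymptotic $N_1 = n^{1-o(1)}$ follows from $\log \ell'! = (1 - \eps/4)\sqrt{\log n\,\log\log n/2}\,(1+o(1)) = o(\log n)$ via Stirling. Since $H$ depends only on the primary pool and $Q$ only on the secondary pool, $H$ and $Q$ are independent, and a Chebyshev/Azuma bound on $|H \cap Q|$ gives $|H \cap Q| \ge N_1^2/(2n) = n^{1-o(1)}$ with comparable probability.

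Let $a_m$ and $b_m$ denote respectively the number of accepted primaries and used pool secondaries at bin $m$, so $F_{\rho n}(m) = a_m + b_m$. The core combinatorial observation is: for any $m \in H \cap Q$, if every primary to $m$ is accepted and every pool secondary to $m$ is used, then $a_m + b_m \ge \ell' + \ell' = T$, contradicting $\Maxload^g_{\rho n} < T$. Hence, on the event $\{\Maxload^g_{\rho n} < T\}$, every $m \in H \cap Q$ must lie in $R \cup U$, where $R$ is the set of bins with at least one rejected primary and $U := \{m : B_{r}(m) < B_{\rho n}(m)\}$ collects bins having at least one unused pool secondary (one whose pool position exceeds $r$). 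Clearly $|R| \le r$, since each rejection is tied to exactly one bin.

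The hard part will be controlling $|U|$ and reconciling $|H \cap Q| \le |R|+|U|$ with $|H \cap Q| = n^{1-o(1)}$. My plan is a dichotomy on $r$: for $r$ below a threshold $r^*$ tuned to $\ell'$, replace $Q$ by its early-pool analogue $Q^* := \{m : B_{r^*}(m) \ge \ell'\}$, so that $|U \cap Q^*|$ becomes negligible and $|R|+|U \cap Q^*| < |H \cap Q^*|$ yields the contradiction; for $r$ above the Lemma~\ref{lem:retries lead to high bin} threshold at which the first $r$ uniform secondaries already produce a bin of load $\ge T$, that lemma directly contradicts $\Maxload^g_{\rho n} < T$. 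Matching the two regimes cleanly so that the final bound is $\exp(-n^{\eps/5})$ is the crux: one must optimise $r^*$ against the Stirling estimate for $\ell'!$ and ensure that no intermediate value of $r$ slips through.
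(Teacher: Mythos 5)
Your proposal takes a genuinely different route from the paper: a one-shot argument based on the independence of the primary pool $\{Z^0_i\}$ and secondary pool $\{Z^1_i\}$, producing the sets $H$ and $Q$ in one pass; whereas the paper runs a staged induction over $s\approx(2-\eps)\ell$ blocks of $w\approx\rho n/(2\ell)$ balls, tracking the set $S_k$ of bins with $\ge k$ primary hits and applying a ``one-step'' lemma (Lemma~\ref{lem : one step}) at each stage. Your observation that every $m\in H\cap Q$ must lie in $R\cup U$ is correct, and so is the independence/concentration giving $|H\cap Q|=n^{1-o(1)}$.

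However, the step you yourself flag as the crux --- controlling $|U|$ via a dichotomy on $r$ --- has a real gap, and I do not see how the proposed $Q^*$-substitution closes it. If $r<r^*$ then positions $r+1,\dots,r^*$ of the pool are \emph{unused}, so a bin $m$ with $B_{r^*}(m)\ge\ell'$ typically has some of its $\ell'$ hits in that unused range and therefore still lands in $U$; replacing $Q$ by $Q^*$ does not make $|U\cap Q^*|$ negligible in the small-$r$ regime. More fundamentally, there is a quantitative tension that a one-shot argument must resolve: to have a non-trivial set of bins with $\ge\ell'$ secondary hits among the \emph{used} prefix of the pool, one needs $r\gtrsim n/\ell'!$; but once $r\gtrsim n/\ell'!$ the adversary has roughly enough rejections to excise one primary hit from every bin of $H$ (since $|H|\approx n/\ell'!$), so $|H\cap Q^*|\le|R|$ gives no contradiction. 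Conversely, when $r\ll n/\ell'!$ the used secondaries cannot produce bins of secondary-load $\ell'$ at all, so the ``large load from secondaries'' branch of your dichotomy is empty, and the accepted-primaries branch only yields load $\approx\ell'$, a factor $2$ short of the target $(2-\eps)\ell$. The paper sidesteps exactly this tension by proceeding in $\sim 2\ell$ stages, in each of which the adversary faces a dichotomy (Lemma~\ref{lem : one step}) with a \emph{per-stage} rejection budget: either it rejects fewer than $T=n\zeta^{k+1}$ times and then at least that many bins of $S_k$ get an accepted primary and advance to $S_{k+1}$, or it rejects more than $T$ times and the used secondaries in that stage already produce a bin of load $\ge(2-\eps)\ell-k$ inside $S_k$. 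This staging is what lets the budget threshold $T$ shrink geometrically with $k$ while keeping both branches nontrivial --- something a single global threshold $r^*$ cannot replicate. As written, your proof therefore has a genuine gap at its central step.
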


To prove Proposition~\ref{props : lower bound on maxload} we use the following lemma.
\begin{lem}\label{lem : one step}
Let $\eps,\rho>0$ and $n$ sufficiently large (depending on $\rho$ and $\eps$) and
denote $\zeta=\rho/8 e\ell$. For any $1\le k \le 2\ell$, $t>\rho n/ 2\ell$ and $S\subset [n]$ such that $|S|\ge n \zeta^k$ and any strategy $g$, we have
\[\Prob\big(E,F\big)\le \exp(- n^{\eps/4}),\]
where
$E=\{\phi^1_{t}(S) <  n\zeta^{k+1}\}$ and
$F=\{\Maxload^g_t(S) < (2-\eps)\ell -k \}$.
\end{lem}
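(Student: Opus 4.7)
The plan is to bound $\Pr(E,F)$ by splitting on the total number of rejections $r_t$, taking the threshold $r^*=n\zeta^{k+1}$ (the right-hand side of the event $E$).

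In the first case $r_t\ge r^*$, the key observation is that every secondary allocation to a bin $m$ places a ball at $m$, so $B_{r_t}(m)\le F_t(m)$ for every $m$. Under $F$ this forces $B_{r_t}(m)<(2-\varepsilon)\ell-k$ for every $m\in S$, and by monotonicity in the number of samples, $\max_{m\in S}B_{r^*}(m)<(2-\varepsilon)\ell-k$ as well. Since the secondary pool $Z^1_1,\ldots,Z^1_{r^*}$ is a deterministic prefix of an i.i.d.\ uniform sequence (the strategy's decisions at each step use only the primaries and already-consumed secondaries), Lemma~\ref{lem:retries lead to high bin} applies directly with $\theta=\zeta^{k+1}$ and $a=(2-\varepsilon)\ell-k$, giving a bound of the form $2\exp(-\zeta^{(k+1)a+k}\,n/(ea!))$.

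In the second case $r_t<r^*$, I would apply Lemma~\ref{lem:saturation without retry} to the primary process $A_t$ (a uniform allocation of $t$ balls into $n$ bins) restricted to $S$, with $\theta'=t/n\ge\rho/(2\ell)$: with probability at least $1-2\exp(-(\theta')^2|S|/(2e^2))$ one has
\[\alpha_t^1(S)\ge\frac{\theta'|S|}{2e}\ge\frac{\rho n\zeta^k}{4e\ell}=2n\zeta^{k+1},\]
using the definition $\zeta=\rho/(8e\ell)$. On this event, each bin in $S$ with $A_t\ge 1$ is either touched (an accepted primary gives $F_t\ge 1$, contributing to $\phi_t^1(S)$) or has all of its primaries rejected (contributing at least one to $r_t$), so
\[\phi_t^1(S)\ge\alpha_t^1(S)-r_t>2n\zeta^{k+1}-n\zeta^{k+1}=n\zeta^{k+1},\]
contradicting $E$. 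So the contribution from this case is at most the failure probability of the saturation lemma.

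It then remains to verify that both probability bounds are at most $\exp(-n^{\varepsilon/4})$. Using $\ell=\lfloor\sqrt{2\log n/\log\log n}\rfloor$, the identity $\ell^2\log\ell=(1+o(1))\log n$, and Stirling's formula, I expect $(\theta')^2|S|/(2e^2)\ge n^{1-o(1)}$ in the second case (easy), and in the first case that the exponent $\zeta^{(k+1)a+k}\,n/(ea!)$ is at least $n^{\varepsilon/2}$ uniformly in $1\le k\le 2\ell$. The main obstacle will be the first computation: one must maximize $(k+1)(2-\varepsilon)\ell-k^2$ over $k\in[1,2\ell]$, show the maximum is at most $(1-\varepsilon/2)^2\ell^2$ attained near $k\approx\ell$, and then exploit $\ell^2\log(1/\zeta)\sim\log n$ together with $a!\le e^{O(\sqrt{\log n\log\log n})}=n^{o(1)}$ to extract a factor of $n^{\varepsilon/2}$ out of the exponential. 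The precise choice $r^*=n\zeta^{k+1}$ is the critical matching point between the two regimes: Case~2 demands $r^*\le n\zeta^{k+1}$ so that $\alpha_t^1(S)-r^*$ still exceeds $n\zeta^{k+1}$, while any smaller $r^*$ would weaken Case~1's exponent unnecessarily.
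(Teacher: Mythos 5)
Your proof is correct and follows essentially the same approach as the paper's: you split on whether the number of rejections $r_t$ is above or below $T=n\zeta^{k+1}$, handle the high-rejection case with Lemma~\ref{lem:retries lead to high bin} applied to the secondary pool $B_T$ (implying a high bin in $S$ if $r_t\ge T$), and handle the low-rejection case with Lemma~\ref{lem:saturation without retry} applied to the primaries together with the key inequality $\phi^1_t(S)\ge \alpha^1_t(S)-r_t$; your arithmetic verification $\rho n\zeta^k/(4e\ell)=2n\zeta^{k+1}$ is exactly the matching condition the paper needs and your sketch of the exponent maximization over $k$ is sound up to lower-order terms, which is sufficient.

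One minor point of care: in the low-rejection case you write $\theta'=t/n\ge\rho/(2\ell)$, but since $t$ may exceed $n$ (when $\rho>1$), you should instead take $\theta=\rho/(2\ell)$ and use monotonicity of $\alpha^1_t(S)$ in $t$, restricting attention to the first $\lceil\rho n/(2\ell)\rceil$ balls; this is what the paper implicitly does and both the load lower bound and the failure probability are controlled with this smaller $\theta$.
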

\begin{proof}
Write $T=n\zeta^{k+1}$ and
denote $E'=\{\alpha^1_t(S)<2T\}$
and $F'=\left\{\beta^{(2-\eps)\ell-k}_{T}(S)=0\right\}$.
By applying Lemma~\ref{lem:saturation without retry} with $\theta=\frac{\rho}{2\ell}$ and observing that $2T \le 2\zeta|S|\le \frac{\theta|S|}{2e}$,
we obtain
\[
\Prob\left(E'\right)\le 2\exp\left(-\frac{\theta^2|S|}{2e^2}\right)\le 2\exp\left(-\frac{8n\rho^{k+2}}{(8e\ell)^{k+2}}\right)\le 2\exp\Big(-n^{1+o(1)}\Big)\ ,
\]
Where in the rightmost inequality we used the fact that $k<2\ell$.
By applying Lemma~\ref{lem:retries lead to high bin} with $a=(2-\eps)\ell -k$ and $\theta=\zeta^{k+1}$,
\[
\Prob(F') \le 2\exp\left(-\frac{\zeta^{(k+1)a}|S|}{e a!}\right)\le 2\exp\left(-\frac{\zeta^{(k+1)a} \zeta^k n}{e a^a}\right)
\le 2\exp\left(-\frac{\zeta^{(k+1)(a+1)} n}{e a^a}\right).
\]
Letting $n$ be large enough, and observing that for such $n$ we have $(a+1)(k+1)\le (1-\eps/2)\ell^2$ we obtain
\[
\Prob(F')\le 2\exp\left(-\frac{(\rho/8e\ell)^{(1-\eps/2)\ell^2}n}{e (2\ell)^{2\ell}}\right)
\le 2\exp\Bigg(-\frac{n}{\ell^{(1-\eps/3)\ell^2}}\Bigg)\le 2\exp(-n^{\eps/3}),
\]
where the two rightmost inequalities use the fact that $\ell^{\ell^2}\ge n$, while
$c^{\ell^2}$ and $\ell^\ell$ are both sub-polynomial in $n$ for any $c>0$.

We claim that $E'^c\cap F'^c\subseteq E^c\cup F^c$.
Indeed, we observe that  $\{r_t\le T\}\cap E'^c\subset E^c$,
while $\{r_t> T\}\cap F'^c\subset F^c$. Hence $E\cap F\subset E'\cup F'$.
From our bounds on $\Prob(E')$ and $\Prob(F')$ the proposition follows.
\end{proof}

\begin{proof}[Proof of Proposition~\ref{props : lower bound on maxload}]
Fix $\eps,\rho>0$ and let $g$ be a thinning strategy.
We divide our process into $s=\lceil(2-\eps)\ell\rceil$ stages each consisting of the allocation of $w=\left\lceil\frac{\rho n}{2\ell}\right\rceil$ balls so that the $k$-th stage process consists of $Z_{(k-1)w+1},\dots,Z_{kw}$. These are followed by a final stage in which the remaining balls are allocated.

Denote $S_k=\{m\in [n]\ :\ A_{kw}(m)\ge k\}$. For $\zeta=\rho/8 e\ell$, we define
$E_{k}=\{|S_k| <  n\zeta^{k}\}$ and $F_{k}=\{\Maxload_{kw}^g < (2-\eps)\ell\}$.

By applying Proposition~\ref{lem : one step} to the $k$-th stage process with $S=S_k$ we obtain that
$$\Pr(E_{k+1}\cap F_{k+1}\ |\ E^c_k)\le \exp(- n^{\eps/4}).$$
The see this, observe that
the size of $S_{k+1}$ is at least the number of bins in $S_k$ which were allocated at least one ball in the $k$-th stage process and that $\Maxload^g_{(k+1)w}$
is at least $k$ plus the maximum number of balls that were allocated in the $k$-th stage process to a single bin in $S_k$.

Observe that $F_{k+1}\subseteq F_{k}$ we use the law of total probability to obtain
$$\Pr(E_{k+1}\cap F_{k+1})=
\Pr(E_{k+1}\cap F_{k+1}\cap E_k)+ \Pr(E_{k+1}\cap F_{k+1}\cap E^c_k)
\le \Pr(E_{k}\cap F_{k}) + \Pr(E_{k+1}\cap F_{k+1}\ |\ E^c_k)$$

Since $E_0\cap F_0 =\emptyset$, we may use induction to deduce that for sufficiently large $n$ we have,
$$\Pr\left(E_{s}\cap F_{s}\right)\le
\sum_{k=1}^{s} \Pr(E_{k}\cap F_{k}\ |\ E^c_{k-1})\le
s\exp(- n^{\eps/4})\le \exp(- n^{\eps/5}).$$

Since $\{\Maxload^g_{\rho n} < (2-\eps) \ell\}\subset E_{s}\cap F_{s}$, this concludes the proof.

\end{proof}

\end{document}